\newdefinition{rem}{Remark}[section]
\newdefinition{theorem}{Theorem}[section]
\newdefinition{corollary}{Corollary}[section]
\newdefinition{definition}{Definition}[section]
\newdefinition{lemma}{Lemma}[section]
\newdefinition{prop}{Proposition}[section]
\numberwithin{equation}{section}
\begin{document}
\begin{frontmatter}
\title{Fourier decay rate of coin-tossing type measures}
\author[a]{Xiang Gao}\ead{gaojiaou@gmail.com}
\author[b]{Jihua Ma}\ead {jhma@whu.edu.cn}
\author[b]{Kunkun Song*}\ead{songkunkun@whu.edu.cn}
\author[b]{Yanfang Zhang}\ead{yanfangzhang007@whu.edu.cn}
\address[a]{Department of Mathematics, Hubei key laboratory of Applied Mathematics, Hubei University, Wuhan, 430062, P. R. China}
 \address[b]{School of Mathematics and Statistics, Wuhan University, Wuhan, 430072, P. R. China}
\cortext[cor1]{Corresponding author.}

\begin{abstract}
\par Motivated by a classical result of Hartman and Kershner, we give explicit estimates of the Fourier decay rate of some coin-tossing type measures. Using an elementary method originated from Cassels and Schmidt, we also prove that almost every point is absolutely normal with respect to such measures. As an application, we present new examples of measures whose Fourier decay rate could be as slow as possible, yet almost all points are absolutely normal, which complements a result of R. Lyons on the set of non-normal numbers.
\end{abstract}
\begin{keyword}
 Fourier transforms, Coin-tossing type measures, Normal numbers
\MSC[2010] 42A38, 60A10, 11K16
\end{keyword}
\end{frontmatter}
\section{Introduction}
\indent Given a Borel probability measure $\mu$ on $\mathbb{R}$, its Fourier-Stieltjes transform is defined by
\begin{equation}\label{1.1}
{\widehat\mu}(t)=\int_{\mathbb{R}}e^{2\pi itx}d\mu(x),\ \ \ \ t\in\mathbb{R}.
\end{equation}
The decay rate of ${\widehat\mu}(t)$ as $|t|\rightarrow\infty$ often provides lots of information about $\mu$. For instance, the absolute continuity and singularity of the measure $\mu$, the equidistribution phenomenon \cite{JS16} and Roth-type theorem \cite{LP09} on the support of the measure $\mu$. It is well known that if the Fourier decay rate of a measure is polynomial, that is $|{\widehat\mu}(t)|= O(|t|^{-\epsilon})$ for some positive constant $\epsilon$, then the Hausdorff dimension of $\mu$ is no less than $2\epsilon$. Moreover, $\mu$ almost every point is absolutely normal (normal to any integer base). In fact, to ensure the normality, the condition of polynomial decay can be weakened to logarithmic decay, see \cite{MS17}. So one would ask whether there exist some measures which have slower Fourier decay rate, yet to guarantee that $\mu$ almost every point is absolutely normal. We shall show that the answer is positive by providing a class of explicit examples.\\
\indent  Estimates for Fourier decay rate of measures are very useful tools in Fourier analysis, dynamical systems and fractal geometry. However, it is often not easy to obtain such estimate. Sometimes it may also happen that ${\widehat\mu}(t)$ does not vanish at infinity, while its asymptotic average of its Fourier transform tends to $0$. This has been proved in $1939$ by Wiener and Wintner for the standard middle-third Cantor measure, and generalized by Strichartz to more general self-similar measures, see \cite{Bis04,Stri}.\\
\indent One of the important examples which can be explicitly estimated are Bernoulli convolutions $\mu_\lambda$, i.e., the distribution of the random sum $\sum \pm \lambda^n$, where the signs are chosen independently with probability $\frac{1}{2}$. It is Kershner and Bary (independently) who first showed that the Fourier decay rate has close relation with the arithmetic property of $\lambda$ \cite{Ker36}. More precisely, the Fourier decay rate of $\mu_\lambda$ is logarithmic when $\lambda$ is a rational number. For similar results about some classes of algebraic numbers, see \cite{BS14,DFW07,GM17}. Erd\H{o}s \cite{Er39,Er40} proved that the decay rate of ${\widehat\mu_\lambda}(t)$ is polynomial for a.e. $\lambda$ sufficiently close to 1. For more information in this direction, see the survey \cite{PSS00}.\\
\indent In recent years, much attention has been paid to the Fourier decay of various measures. For instance, the large deviation result on the Fourier transform of self-similar measures has been obtained in \cite{Tsu}. The behaviour of Fourier decay rate of measures which are images of smooth perturbation of self-similar measures has been studied in \cite{CG17,MS17}. See also the original work of Kaufman \cite{Kau84}.
Very recently, by using the discretized sum-product theorem on $\mathbb{R}$, Bourgain and Dyatlov \cite{BD17} proved that the Patterson-Sullivan measure on the limit set of a co-compact subgroup of $SL(2,\mathbb{Z})$ has polynomial decay rate. For similar results about Furstenberg measures on $SL(2,\mathbb{R})$, see \cite{Li17}. For Gibbs measures of the Gauss map, see \cite{JS16,Kau81}.\\
 \indent In this paper, we are concerned with the coin-tossing type measure which is an infinite convolution on $(0,1)$:
\begin{equation}\label{1.2}
 \mu=\Asterisk_{n=1}^{\infty}\left(\frac{1}{2}(1+\phi(n))\delta_0+\frac{1}{2}(1-\phi(n))\delta_{2^{-n}}\right),
 \end{equation}
 where $\phi:\mathbb{N}\rightarrow(0,1)$ is a weight function, and $\delta_x$ is the Dirac measure at $x$. There is an alternative way to interpret the measure $\mu$ from the probabilistic viewpoint. Consider the random sum
$$S=\sum_{n=1}^{\infty}X_n 2^{-n},$$
where $\left\{X_n\right\}_{n\geq1} $ are the independent random variables satisfying
$$P(X_n=0)=\frac{1}{2}(1+\phi(n)),\quad P(X_n=1)=\frac{1}{2}(1-\phi(n)).$$
In fact, $X_n$ can be interpreted as the random variable, which equals one if the outcome of the $n$-th coin toss is head (with probability $\frac{1}{2}(1-\phi(n))$), and zero otherwise in an infinite coin-tossing experiment.
Then $\mu$ is the distribution of $S$, i.e., $\mu(A)=P(\omega: S(\omega)\in A)$.\\
\indent The classical Jessen-Wintner law of pure types \cite{PSS00} implies that $\mu$ is either absolutely continuous or singular. Salem \cite{Sal43} proved that the above coin-tossing type measure is singular if and only if the series $\sum \phi^2(n)$ is divergent. Moreover, he showed that $\phi(n)\rightarrow0$ is a necessary condition for the Fourier transform of $\mu$ vanish at infinity (in this case, $\mu$ is called a Rajchman measure). In 1971, Blum and Epstein \cite{BE71} proved that $\phi(n)\rightarrow0$ is also a sufficient condition of $\mu$ being a Rajchman measure, for details, the reader could see \cite{M71}.\\
\indent So there is a natural question: what is the  explicit Fourier decay rate of the coin-tossing type measure in \eqref{1.2} for general weight function $\phi$? Observe that when $\phi(n)\equiv 0$, $\mu$ degenerates into the Lebesgue measure. Heuristically speaking, the more rapidly  $\phi$ decreases to $0$, the more rapidly the Fourier transform ${\widehat\mu}(t)$ tends to $0$. We shall show that this is indeed the case. As far as we know, there is no other results having been known till now except the one  due to Hartman and Kershner \cite{HarKer}, who proved that $|{\widehat\mu}(t)|=O(\log^{-\frac{1}{2}}|t|)$ when $\phi(n)=n^{-\frac{1}{2}}$.
Using Hartman-Kershner method, we shall present explicit decay rates for general weight functions. Our main result is the following.
\begin{theorem}\label{thm1.1}
Suppose $\phi:\mathbb{N}\rightarrow(0,1)$ is a monotone function decreasing to zero. Let $\mu$ be the corresponding measure defined by \eqref{1.2}.
 \begin{enumerate}[(1)]
   \item If $\dfrac{\phi(n)}{\phi(n+1)}<2$ for any $n$, then $|\widehat{\mu}(t)|= O(\phi(\lceil\gamma_1\log_2|t|\rceil))$, where $\gamma_1\in(0,1)$ is a constant depending only on $\phi$;
   \item If $\dfrac{\phi(n)}{\phi(n+1)}\geq2$ for any $n$, then $|\widehat{\mu}(t)|=O({|t|^{-\gamma_2}})$, where $\gamma_2\in(0,1)$ is a constant depending only on $\phi$.
 \end{enumerate}
\end{theorem}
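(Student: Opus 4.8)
The starting point is the convolution structure \eqref{1.2}, which factorises the Fourier transform as
\[
\widehat{\mu}(t)=\prod_{n=1}^{\infty}\Bigl(\tfrac{1+\phi(n)}{2}+\tfrac{1-\phi(n)}{2}e^{2\pi i t2^{-n}}\Bigr),
\]
and hence, by a direct computation of the modulus of each factor,
\[
|\widehat{\mu}(t)|^{2}=\prod_{n=1}^{\infty}\bigl(1-(1-\phi^{2}(n))\sin^{2}(\pi t2^{-n})\bigr)=\prod_{n=1}^{\infty}\bigl(\cos^{2}(\pi t2^{-n})+\phi^{2}(n)\sin^{2}(\pi t2^{-n})\bigr).
\]
Since every factor lies in $(0,1]$, for each $N$ we have $|\widehat{\mu}(t)|^{2}\le\prod_{n=1}^{N}\bigl(\cos^{2}(\pi t2^{-n})+\phi^{2}(n)\bigr)$, and it suffices to estimate this finite product for $N$ of the order of $\log_{2}|t|$. (The same reduction can be obtained by iterating $\widehat\mu(2t)=\bigl(\tfrac{1+\phi(1)}{2}+\tfrac{1-\phi(1)}{2}e^{2\pi i t}\bigr)\widehat{\mu_{1}}(t)$, where $\mu_{1}$ is the coin-tossing measure with weight function $n\mapsto\phi(n+1)$; this passes to a bounded frequency at the price of shifting $\phi$, and is the route closest in spirit to Hartman and Kershner.)

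The core of the proof is to read off this finite product from the binary expansion of $|t|$. Write $k=\lfloor\log_{2}|t|\rfloor$ and $\beta_{n}=\{|t|2^{-n}\}=0.\,\epsilon_{n-1}\epsilon_{n-2}\cdots$ in base two, where $\epsilon_{j}$ are the binary digits of $|t|$. One observes: (i) the $n$-th factor is within $O(4^{-r})$ of $1$ when $\epsilon_{n-1}=\dots=\epsilon_{n-r}$, i.e.\ when $n$ lies inside a run of equal digits; (ii) it is at most a fixed $\rho<1$ whenever $\epsilon_{n-1}\ne\epsilon_{n-2}$ (a ``digit change''), provided $n$ exceeds a constant depending only on $\phi$; and (iii) if $n$ lies one step above a maximal run of length $R$, then $\beta_{n}$ is within $2^{-(R+1)}$ of $\tfrac12$, so the factor is $\lesssim 4^{-R}+\phi^{2}(n)$. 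Keeping, for each maximal run of the digits of $|t|$, only the one factor lying just above it (which is both a digit change and of type (iii)) and discarding all others (each $\le1$) gives
\[
|\widehat{\mu}(t)|^{2}\ \lesssim\ \prod_{i}\min\!\bigl(\rho,\ 4^{-R_{i}}+\phi^{2}(Q_{i})\bigr),
\]
where $R_{i}$ and $Q_{i}\asymp k-(R_{1}+\dots+R_{i-1})$ are the length and the (descending) position of the $i$-th run, with $\sum_i R_i\asymp k$. One then optimises the right-hand side over all digit patterns. In case (1) the hypothesis $\phi(n)/\phi(n+1)<2$ says $\phi$ never halves in one step, which gives $\phi^{2}(Q)\gtrsim 4^{-Q}$, so that polynomial-sized contributions automatically beat $\phi^{2}$ at a comparable index; the extremal patterns reduce to a single long run, and there the dominant factor sits at an index $Q\asymp\gamma_{1}k$ at which the $\phi^{2}$-term wins, yielding $|\widehat{\mu}(t)|=O(\phi(\lceil\gamma_{1}\log_{2}|t|\rceil))$ with $\gamma_{1}\in(0,1)$ fixed by the decay rate of $\phi$. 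In case (2) one has $\phi^{2}(n)\le\phi^{2}(1)4^{-(n-1)}$, so each factor above is $\lesssim 4^{-\min(R_{i},Q_{i})}$; since no run extends below the units digit we have $R_{i}\le Q_{i}$ up to a constant, whence $\sum_{i}\min(R_{i},Q_{i})\gtrsim k$ and the product is $O(|t|^{-2\gamma_{2}})$, i.e.\ $|\widehat{\mu}(t)|=O(|t|^{-\gamma_{2}})$.

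The step I expect to be the genuine obstacle is this optimisation over digit patterns, i.e.\ controlling how the ``bit budget'' $\asymp\log_{2}|t|$ can be split into runs relative to the decay profile of $\phi$. A single factor never suffices on its own — for example if $|t|$ is odd the factor at $n=1$ already has modulus $\phi(1)$, yet that is only a constant — so one has to trade the $\phi^{2}$-type gains coming from long runs against the universal $\rho$-type gains coming from digit changes, and the position of the resulting bottleneck (hence the threshold between the two conclusions, and the admissible values of $\gamma_{1},\gamma_{2}$) depends delicately on how $\phi$ decreases; this is precisely where the case distinction on $\phi(n)/\phi(n+1)$ enters. What remains is routine: treating non-integer $t$ (an infinite expansion, so the run sum is formally infinite, but only runs at non-negative digit positions carry index weight) and tracking the multiplicative constants so that the argument of $\phi$ in the final estimate is a clean constant multiple of $\log_{2}|t|$.
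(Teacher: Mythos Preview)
Your reduction to the product $\prod_n(\cos^2\pi t2^{-n}+\phi^2(n)\sin^2\pi t2^{-n})$ and your reading of the factors through the binary expansion of $|t|$ is exactly the paper's approach; observations (i)--(iii) correspond to its Lemmas~2.1--2.3, and your ``factor just above a maximal run'' is precisely the paper's ``good index'' $n_i$ sitting just above a block $B_i$ of indices where $\|2^{-n}t\|$ is small. The paper introduces a threshold $2^{-K_\phi}$ to define ``close to an integer'' rather than working with runs of all lengths, but the two viewpoints are equivalent: blocks in the paper correspond to runs of length at least $K_\phi$, while shorter runs get absorbed into the uniform $\delta$-bound. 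So the decomposition and the basic bound --- a power of $\delta$ times $\prod_i(\phi^2(n_i)+\pi^24^{-l_i})$ --- are the same.

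Where your sketch diverges is in the optimisation, and here your account is not quite right. You say that in case~(1) the extremal pattern ``reduces to a single long run'' with the dominant factor at index $\asymp\gamma_1k$; but a single run of length $k$ places its good factor at index $\asymp k$, not $\gamma_1k$, and already gives the stronger bound $\phi(k)$. The genuine worst case is a \emph{balance}, and the paper handles it in two steps. First, the hypothesis $\phi(n)<2\phi(n+1)$ is used to collapse the product inductively to a single term,
\[
\prod_{i=1}^{j}\bigl(\phi^2(n_i)+\pi^24^{-l_i-K_\phi}\bigr)\ \le\ \phi^2\bigl(1+\textstyle\sum_il_i\bigr)+\pi^24^{-\sum_il_i-K_\phi}\ \lesssim\ \phi^2(k),
\]
via inequalities of the form $\phi(1+l_1)<2^{l_2}\phi(1+l_1+l_2)$ together with $n_i\ge 1+l_1+\cdots+l_i$; this is the step that actually consumes the ratio hypothesis. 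Second, the residual factor $\delta^{m-k-j}$ is played off against $\phi^2(k)$ through the dichotomy $k\gtrless\gamma_1m$ with $\gamma_1=-\tfrac12\log_2\delta$, and the hypothesis is used \emph{again} to show $\delta^{m-2k}\phi^2(k)\lesssim\phi^2(\lceil\gamma_1m\rceil)$ on the branch $k<\gamma_1m$. Your one-factor-per-run bookkeeping, by discarding all factors inside runs, throws away precisely this $\delta^{m-k-j}$ contribution (which in your language comes from the many digit changes inside stretches of short runs), and it is indispensable on that branch. Case~(2) is handled by the same inductive collapse with the roles of $\phi^2$ and $4^{-l}$ swapped, giving $\prod_i(\phi^2(n_i)+4^{-l_i})\le(1+\phi^2(1))4^{-k}$; your argument via $R_i\le Q_i$ reaches the same product bound, but as written it again loses the $\delta$-power needed when $k<\gamma_2m$. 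So the strategy is sound and matches the paper, but the optimisation you flag as the obstacle requires both the inductive telescoping and the retention of the $\delta$-factor, neither of which appears in your sketch.
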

\begin{rem}
Before proceeding on, we give some remarks.
\begin{enumerate}[$\bullet$]
\item Typical examples of weight functions $\phi$ in case $(1)$ are $(\log n)^{-\tau_1}, n^{-\tau_2}$, where $\tau_1,\tau_2 $ are positive constants, while for case $(2)$ are $\kappa^{-n}$, with $\kappa$ being constant no smaller than $2$.
\item One may wonder if the sequence $\{2^{-n}\}_{n\geq1}$ could be replaced by $\{a^{-n}\}_{n\geq1}$ for some integer $a\geq3$.
We shall see that the corresponding measure is not a Rajchman measure.
\item Our proof of Theorem \ref{1.1} is based on the behaviour of the fractional part of $\{2^{-n}t\}_{n\geq1}$.
\end{enumerate}
\end{rem}
\par Following the method of Cassels \cite{Cass} and Schmidt \cite{Sch60}, we are able to prove the following theorem.
\begin{theorem}\label{thm1.2}
Let $\mu$ be the measure defined by \eqref{1.2}. Then $\mu$ almost every point is absolutely normal.
\end{theorem}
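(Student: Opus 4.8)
\emph{Proof proposal.} By Weyl's criterion (together with Wall's characterization of normality), a point $x$ is absolutely normal if and only if for every integer base $b\ge 2$ and every nonzero $h\in\mathbb Z$ the Weyl sums $\frac1N\sum_{n<N}e^{2\pi i h b^{n}x}$ tend to $0$; as there are only countably many pairs $(b,h)$, it suffices to prove this convergence for one fixed pair and $\mu$-a.e.\ $x$. I would treat separately the bases $b$ that are powers of $2$ and those that are not.

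When $b=2^{a}$, normality to base $b$ reduces to normality to base $2$, and the base-$2$ digits of a $\mu$-distributed point are precisely the independent variables $X_{n}$, with $P(X_{n}=1)=\tfrac12(1-\phi(n))\to\tfrac12$ (this is where $\phi(n)\to0$ enters). For a fixed word $w\in\{0,1\}^{L}$ the events ``$w$ occurs starting at position $n$'' form a bounded, $(L-1)$-dependent sequence whose means converge to $2^{-L}$; splitting into the $L$ residue classes modulo $L$, on each of which the events are mutually independent, and applying Kolmogorov's strong law of large numbers, the frequency of $w$ equals $2^{-L}$ almost surely. Intersecting over the countably many words $w$ yields normality to base $2$, hence to base $2^{a}$.

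When $b$ has an odd prime factor, write $b=2^{a}m$ with $m\ge3$ odd. Here I would invoke the Davenport--Erd\H{o}s--LeVeque criterion: the required a.e.\ convergence follows once $\sum_{N\ge1}\frac1N\,\mathbb E\big|\frac1N\sum_{n<N}e^{2\pi ihb^{n}x}\big|^{2}<\infty$. Expanding the square, the diagonal contributes $\sum_{N}N^{-2}$; grouping the off-diagonal terms by $q=|p-n|$ and using $b^{p}-b^{n}=b^{n}(b^{q}-1)$, everything reduces to the estimate, uniform in $q$,
\[
\sum_{n=0}^{N-1}\bigl|\widehat\mu\bigl(hb^{n}(b^{q}-1)\bigr)\bigr|\ \le\ C_{b,h}\,N^{1-\eta}\qquad\text{for some }\eta>0 ,
\]
since this gives $\mathbb E|\cdots|^{2}=O(N^{-\eta})$ and hence summability. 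To obtain it I would start from the exact identity $|\widehat\mu(t)|^{2}=\prod_{k\ge1}\bigl(1-(1-\phi(k)^{2})\sin^{2}(\pi t2^{-k})\bigr)$; fixing $k_{0}$ with $\phi(k_{0})^{2}\le\tfrac12$, this yields for every $j$ a bound of the form $|\widehat\mu(t)|\le\exp\bigl(-\tfrac14\sum_{\ell=1}^{j}\sin^{2}(\pi\{t'2^{-\ell}\})\bigr)$, where $t'$ is the odd part of $t$ (up to a bounded shift of the indices coming from the $2$-adic valuation of $t$). For $t=hb^{n}(b^{q}-1)$ one checks that $t'=w(q)\,m^{n}$ for a fixed odd integer $w(q)$ and that $t'$ has $\asymp n$ binary digits. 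The decisive elementary point is that $\sin^{2}(\pi\{v2^{-\ell}\})\ge\tfrac12$ whenever the $(\ell-1)$-st and $(\ell-2)$-nd binary digits of $v$ differ; hence ``$\sum_{\ell\le j}\sin^{2}$ is small'' forces ``$w(q)m^{n}$ has fewer than $\varepsilon j$ binary digit-changes among its lowest $\approx j$ positions.'' By an entropy count, residues with so few digit-changes number at most $2^{(H(\varepsilon)+o(1))j}$ among all residues modulo $2^{j}$ ($H$ the binary entropy function), while $n\mapsto m^{n}\bmod 2^{j}$ runs bijectively over a subgroup of $(\mathbb Z/2^{j}\mathbb Z)^{\times}$ of index $O_{m}(1)$, so the ``bad'' $n$ have density $\ll 2^{-(1-H(\varepsilon))j}$ in each period (of length $\asymp 2^{j}$). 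Taking $j\asymp\log N$ with $\varepsilon$ a small fixed constant then balances the $\exp(-cj)$ decay on the good $n$ against the $O(N^{1-\eta})$ count of the bad $n$, which gives the displayed estimate.

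The hard part is precisely this last quantitative step. Crude bounds such as $|\widehat\mu(t)|\le\phi(v_{2}(t)+1)$ (retaining only the $\ell=1$ factor) are essentially useless when $b$ is odd, and one cannot afford to lose even a logarithm in $N$; so one really must control the binary digit statistics of $m^{n}\bmod 2^{j}$ along the cyclic group $\langle m\rangle$, which is exactly where the Cassels--Schmidt circle of ideas \cite{Cass,Sch60} (an equidistribution / exponential-sum input on $(\mathbb Z/2^{j}\mathbb Z)^{\times}$) is needed. The remaining ingredients — the $2$-adic valuation of $b^{q}-1$, the multiplicative order of $m$ modulo $2^{j}$, and the passage from the Davenport--Erd\H{o}s--LeVeque conclusion back to the definition of absolute normality — are routine.
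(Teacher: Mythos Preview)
Your proposal is correct and follows essentially the same route as the paper: the identical case split (powers of $2$ via the strong law of large numbers on the independent digits $X_{n}$; other bases via Davenport--Erd\H{o}s--LeVeque), the same reduction of the off-diagonal sum to a uniform bound $\sum_{n<N}|\widehat\mu(h'b^{n})|=O(N^{1-\eta})$, and the same Cassels--Schmidt mechanism---using that the low binary digits of $hb^{n}$ (for odd $h,b$) exhaust all residue strings, splitting $n$'s according to whether the string has many or few ``regular pairs''/digit-changes, bounding the product via $\|\,\cdot\,\|\ge\tfrac14$ on the good set, and an entropy count for the bad set. Two inessential remarks: your claim that ``one cannot afford to lose even a logarithm in $N$'' is slightly overstated (the paper does lose a $\log N$ in the dyadic patching and simply absorbs it into a smaller $\eta$), and your treatment of the $b=2^{a}$ case via $L$-dependence and residue classes is in fact more detailed than the paper's one-line appeal to the SLLN.
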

\par Recall that a real number $x\in [0,1)$ is said to be normal to an integer base $b\geq 2$, if $x=0.x_1x_2\cdots$ is expanded in base $b$ with that every combination of digits occurs with the proper frequency. Equivalently, $\{b^n x\}_{n\geq1}$ is uniformly distributed modulo one. Further, if $x$ is normal in any base $b$, then $x$ is called absolutely normal. As a fundamental tool, the famous Weyl's criterion allows  equidistribution questions to be reduced to estimate on exponential sums. Davenport, Erd\H{o}s and Leveque \cite{DEL63} proved the following criterion:
 \begin{theorem}[Davenport-Erd\H{o}s-Leveque]\label{del}
  Let $\mu$ be a probability measure on $[0,1]$ and $\{s_k\}_{k\geq1}$ a sequence of positive integers with strictly increasing. If for any $h\in\mathbb{Z} \setminus\{0\}$,
$$\sum\limits^{\infty}_{N=1}\frac{1}{N^3}\sum\limits^{N}_{n,m=1}\widehat{\mu}(h(s_n-s_m))<\infty,$$
then for $\mu$ almost every $x$, the sequence $\{s_k x\}_{k\geq1}$ is uniformly distributed modulo one.
\end{theorem}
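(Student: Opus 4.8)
The plan is to derive Theorem~\ref{del} from three ingredients: Weyl's criterion, a second-moment computation in $L^2(\mu)$, and a deterministic lemma that upgrades a weighted $\ell^2$-bound on partial sums to Cesàro convergence. By Weyl's criterion, $\{s_kx\}_{k\ge1}$ is uniformly distributed modulo one if and only if $\frac1N\sum_{k=1}^N e^{2\pi i h s_k x}\to0$ as $N\to\infty$ for every $h\in\mathbb Z\setminus\{0\}$; since $\mathbb Z\setminus\{0\}$ is countable, it suffices to fix $h\ne 0$, put $S_N(x)=\sum_{k=1}^N e^{2\pi i h s_k x}$, and show $S_N(x)/N\to0$ for $\mu$-almost every $x$.

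Next I would compute the second moment. Expanding the square and integrating term by term,
\[
\int_0^1|S_N(x)|^2\,d\mu(x)=\sum_{n,m=1}^N\widehat\mu\big(h(s_n-s_m)\big),
\]
the right-hand side being real because $\widehat\mu(-t)=\overline{\widehat\mu(t)}$. Dividing by $N^3$, summing over $N\ge1$, and interchanging sum and integral by Tonelli's theorem, the hypothesis yields
\[
\int_0^1\,\sum_{N=1}^\infty\frac{|S_N(x)|^2}{N^3}\,d\mu(x)=\sum_{N=1}^\infty\frac1{N^3}\sum_{n,m=1}^N\widehat\mu\big(h(s_n-s_m)\big)<\infty,
\]
so the integrand is finite $\mu$-almost everywhere, i.e.\ $\sum_{N\ge1}|S_N(x)|^2/N^3<\infty$ for $\mu$-a.e.\ $x$.

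It then remains to establish a purely deterministic lemma, which I expect to be the main obstacle: \emph{if $b_1,b_2,\dots\in\mathbb C$ satisfy $|b_n|=1$ and $S_N=\sum_{n=1}^N b_n$ obeys $\sum_{N}|S_N|^2/N^3<\infty$, then $S_N/N\to0$.} I would argue by contradiction. If $S_N/N\not\to0$, there exist $\varepsilon\in(0,1)$ and arbitrarily large $N$ with $|S_N|\ge\varepsilon N$; extract from these a sequence $N_1<N_2<\cdots$ with $N_{j+1}>4N_j$. Since $|S_M-S_{M'}|\le|M-M'|$, we get $|S_M|\ge\varepsilon N_j/2$ whenever $|M-N_j|\le\varepsilon N_j/2$, and on that window $M\asymp N_j$; hence the corresponding block of the series satisfies $\sum_{|M-N_j|\le\varepsilon N_j/2}|S_M|^2/M^3\gg\varepsilon^3$ with an implied constant independent of $j$. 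As $N_{j+1}>4N_j$, these windows are pairwise disjoint, so $\sum_N|S_N|^2/N^3=\infty$, a contradiction. Applying the lemma with $b_n=e^{2\pi i h s_n x}$ for $\mu$-a.e.\ $x$ and intersecting over the countable family $h\in\mathbb Z\setminus\{0\}$ completes the proof.

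The delicate point in the last step is that the exponent $3$ is exactly critical: one must carefully quantify how a single large partial sum forces comparably large partial sums throughout a window of proportional length, so that each such block contributes a fixed positive amount to the series, and simultaneously thin out the indices $N_j$ enough to keep the windows disjoint. Everything else — Weyl's criterion, the $L^2$ expansion, the Tonelli interchange, and the union bound over $h$ — is routine.
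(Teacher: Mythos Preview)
Your argument is correct and is essentially the standard proof of the Davenport--Erd\H{o}s--LeVeque criterion. Note, however, that the paper does not give its own proof of this statement: Theorem~\ref{del} is quoted from \cite{DEL63} and used as a black box in the proof of Theorem~\ref{thm1.2}, so there is no ``paper's proof'' to compare against. What you have written reproduces the original argument of Davenport, Erd\H{o}s and LeVeque --- the $L^2(\mu)$ expansion giving $\int|S_N|^2\,d\mu=\sum_{n,m}\widehat\mu(h(s_n-s_m))$, the Tonelli interchange yielding $\sum_N|S_N(x)|^2/N^3<\infty$ for $\mu$-a.e.\ $x$, and then the deterministic ``thickening'' lemma exploiting $|S_M-S_N|\le|M-N|$ to show that a single large partial sum forces a window of proportional length with comparably large sums, contributing a fixed $\gtrsim\varepsilon^3$ to the series. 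Your care about the critical exponent $3$ and about thinning the $N_j$ to keep windows disjoint is exactly what is needed; the estimate and the disjointness check both go through as you wrote them.
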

\par As a consequence, if
\[\sum_{n=2}^{\infty}\frac{\hat{\mu}(n)}{n\log n}<\infty,\]
then for $\mu$\ almost every $x$, $\{n_kx\}_{k\geq1}$ is uniformly distributed modulo one, whenever $\{n_k\}_{k\geq1}$ is a lacunary integer sequence. On the other hand, Lyons constructed a measure $\nu$ for which the above series is divergent (under some mild additional condition), and $\nu$ almost every point is not normal in base $2$, for the details see \cite{Ly}.
Motivated by the classical Borel's normal numbers theorem, i.e., almost all number in $[0,1)$ are absolutely normal with respect to Lebesgue measure, in $1964$ Kahane and Salem asked whether the same is true with respect to any Rajichman measure. As was shown in the Lyons' counterexample, the answer of the Kahane-Salem problem is negative for general measures. However, for the specific coin-tossing measures, we shall show that the answer for the  Kahane-Salem problem is positive.
\par Combining Theorem \ref{thm1.1} with Theorem \ref{thm1.2}, we immediately obtain the following corollary.
\begin{corollary}\label{1.1}
There exists a Rajchman measure whose Fourier decay tends to zero as slowly as possible, and for which almost every point is absolutely normal.
\end{corollary}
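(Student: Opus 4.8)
\textit{Proof strategy.} The corollary is obtained by combining Theorem~\ref{thm1.1} and Theorem~\ref{thm1.2}, the key observation being that neither statement imposes any lower bound on how slowly the weight $\phi$ may decay: Theorem~\ref{thm1.1}(1) only asks that $\phi(n)/\phi(n+1)<2$, which holds automatically whenever $\phi$ decreases to $0$ slowly enough (indeed whenever $\phi(n)/\phi(n+1)\to1$), while Theorem~\ref{thm1.2} imposes no condition on $\phi$ at all. So, given any prescribed rate, i.e. any sequence $\rho_k\downarrow 0$, the plan is first to build a monotone $\phi:\mathbb{N}\to(0,1)$ decreasing to $0$ with $\phi(n)/\phi(n+1)<2$ for every $n$ and $\phi(k)\ge\rho_k$ for all large $k$; such a $\phi$ is obtained by replacing $\rho_k$ with a slowly varying convex majorant, for instance $\phi(k)=\sup_{j\le k}\rho_j\, c^{\,j-k}$ for a fixed $c\in(1,2)$ (followed by a harmless strictly-decreasing perturbation by $\varepsilon 2^{-k}$). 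Let $\mu$ be the coin-tossing measure attached to this $\phi$ via \eqref{1.2}.

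By Theorem~\ref{thm1.1}(1), $|\widehat{\mu}(t)|=O\bigl(\phi(\lceil\gamma_1\log_2|t|\rceil)\bigr)\to0$, so $\mu$ is a Rajchman measure whose Fourier transform decays no faster than the (slow) profile governed by $\phi$; by Theorem~\ref{thm1.2}, $\mu$-almost every point is absolutely normal. It remains to check that the decay is genuinely ``as slow as possible'', i.e. that $\widehat{\mu}(t)\ne o(\rho(t))$. For this I would use the product expansion $\widehat{\mu}(t)=\prod_{n\ge1}\bigl(\tfrac12(1+\phi(n))+\tfrac12(1-\phi(n))e^{2\pi i t 2^{-n}}\bigr)$ coming from the independence of the $X_n$, and evaluate it at $t=2^{N}$: the factors with $n\le N$ equal $1$, the factor $n=N+1$ collapses to exactly $\phi(N+1)$ because $e^{\pi i}=-1$, and the remaining tail equals $\prod_{m\ge2}\sqrt{\cos^2(\pi 2^{-m})+\phi(N+m)^2\sin^2(\pi 2^{-m})}$, which lies between two absolute positive constants uniformly in $N$ (and tends to $2/\pi$ as $N\to\infty$). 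Hence $|\widehat{\mu}(2^{N})|$ is comparable to $\phi(N+1)\ge\rho_{N+1}$ for large $N$, so $\widehat{\mu}(t)\ne o(\rho(t))$ already along $t=2^{N}$; since $\rho$ was arbitrary, no rate function governs the decay of $\widehat{\mu}$, which is the asserted conclusion.

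\textit{Main obstacle.} The only substantive point is the uniform two-sided bound on the tail product, which reduces to the elementary estimate $\sum_{m\ge2}\phi(N+m)^2\tan^2(\pi 2^{-m})=O(1)$ (using $\tan^2(\pi 2^{-m})=O(4^{-m})$ and $\phi\le1$) together with Vieta's formula $\prod_{m\ge2}\cos(\pi 2^{-m})=2/\pi$ for the unperturbed product; everything else (the construction of the majorant $\phi$ and the verification that the hypotheses of Theorems~\ref{thm1.1} and~\ref{thm1.2} hold) is routine. A minor cosmetic issue is that the constant $\gamma_1<1$ inside the upper bound $\phi(\gamma_1\log_2|t|)$ does not literally match the constant $1$ inside the lower bound $\phi(\log_2|t|)$ along powers of $2$; this mismatch is immaterial, since the lower bound alone already yields the ``slowest possible'' statement, and for the slowly varying $\phi$ constructed above the two profiles are in any case comparable.
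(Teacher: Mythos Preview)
Your proposal is correct and follows precisely the route the paper intends: the corollary is stated in the paper as an immediate consequence of Theorems~\ref{thm1.1} and~\ref{thm1.2}, and your argument simply makes explicit what ``immediate'' means here. The lower bound you compute along $t=2^{N}$, namely $|\widehat{\mu}(2^{N})|\ge \tfrac{2}{\pi}\phi(N+1)$, is exactly the content of the Remark following Lemma~\ref{lem3.1} in the paper, so even this step is anticipated there; your construction of a slowly varying majorant $\phi$ of an arbitrary prescribed rate $\rho$ is the only piece not spelled out in the paper, and it is routine, as you say.
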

\begin{rem}By constructing a variant of coin-tossing type measure, Lyons stated that there exists a Rajchman measure which support on the set of non-normal numbers in base $2$. While we could find a Rajchman measure with explicit Fourier decay which support on the set of the normal numbers in all bases. In some sense our result could be seen as a complementary result of Lyons.
\end{rem}
\section{Preliminaries}
\indent Throughout this text, we use $\|\cdot\|$ to denote the distance to the nearest integer, $\lceil x\rceil$ the smallest integer larger than $x$, $[1,N]$ the set $\{1,2,\ldots,N\}$, $\sharp$ the cardinality of a set and $e(x)=e^{2\pi ix}$ respectively.\\
\indent One may wonder why we choose the sequence $\{2^{-n}\}_{n\geq1}$ in the construction of the measure $\mu$, see \eqref{1.2}. The following proposition partially explains the reason.
\begin{prop}\label{prop2.2}
Let $a\geq3$ be an integer and
\[\mu_a=\Asterisk_{n=1}^{\infty}\left(\frac{1}{2}(1+\phi(n))\delta_0+\frac{1}{2}(1-\phi(n))\delta_{a^{-n}}\right).\]
Then $\mu_a$ is not a Rajchman measure.
\end{prop}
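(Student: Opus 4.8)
\emph{Proof proposal.} The plan is to exhibit a single sequence of frequencies tending to infinity along which $|\widehat{\mu_a}|$ stays bounded below by a positive constant; this already shows that $\widehat{\mu_a}$ does not vanish at infinity, i.e. that $\mu_a$ is not a Rajchman measure. First I would write, for each fixed $t\in\mathbb{R}$,
\[
\widehat{\mu_a}(t)=\prod_{n=1}^{\infty}f_n(t),\qquad f_n(t)=\frac{1+\phi(n)}{2}+\frac{1-\phi(n)}{2}\,e^{2\pi i t a^{-n}},
\]
the product being absolutely convergent since $\sum_n|1-f_n(t)|\le\sum_n 2\pi|t|a^{-n}<\infty$; this identity holds because the finite partial products are the Fourier transforms of the partial convolutions, which converge weakly to $\mu_a$. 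The key point is then to test at the $a$-adic frequencies $t_N=a^N$, $N\ge 1$: for $n\le N$ we have $t_Na^{-n}=a^{N-n}\in\mathbb{Z}$, hence $f_n(a^N)=\tfrac{1+\phi(n)}{2}+\tfrac{1-\phi(n)}{2}=1$, so the first $N$ factors trivialise and only the tail survives.

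Next, setting $n=N+m$ with $m\ge1$, the surviving factors have argument $2\pi a^{-m}$, and here the hypothesis $a\ge 3$ enters: $0<a^{-m}\le a^{-1}\le\tfrac13$. A routine computation (expand $|A+Be^{i\theta}|^2=A^2+B^2+2AB\cos\theta$ and use $\cos\theta=1-2\sin^2(\theta/2)$) gives
\[
|f_{N+m}(a^N)|^2=\cos^2(\pi a^{-m})+\phi(N+m)^2\sin^2(\pi a^{-m})\ \ge\ \cos^2(\pi a^{-m}).
\]
Multiplying over $m\ge1$ yields, \emph{for every} $N$,
\[
|\widehat{\mu_a}(a^N)|\ \ge\ \prod_{m=1}^{\infty}\cos(\pi a^{-m})\ =:\ c_a .
\]

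It then remains to verify $c_a>0$. Since $a\ge 3$ we have $\pi a^{-m}\le\pi/3<\pi/2$ for all $m\ge1$, so each factor $\cos(\pi a^{-m})$ is strictly positive; and $0\le 1-\cos(\pi a^{-m})\le\tfrac12\pi^2 a^{-2m}$ makes $\sum_m\bigl(1-\cos(\pi a^{-m})\bigr)$ convergent, so the infinite product converges to a strictly positive number $c_a$. Combining this with the previous display, $|\widehat{\mu_a}(t)|\ge c_a>0$ along $t=a^N\to\infty$, so $\widehat{\mu_a}(t)\not\to0$ as $|t|\to\infty$, which is the claim.

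The computation is elementary, so I do not anticipate a genuine obstacle; the one thing to get right — and the conceptual heart of the matter — is that the lower bound $c_a$ is positive \emph{precisely} because $a\ge3$. For $a=2$ the $m=1$ term is $\cos(\pi/2)=0$ and the bound collapses entirely, as it must, since with $a=2$ and $\phi\equiv0$ the measure is Lebesgue measure on $[0,1]$, which is Rajchman. It is also worth recording that the estimate above is uniform in $N$ and uses nothing about $\phi$ beyond $0<\phi(n)<1$, so neither monotonicity nor $\phi(n)\to0$ is needed for this proposition.
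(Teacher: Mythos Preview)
Your proof is correct and follows essentially the same route as the paper: write $\widehat{\mu_a}$ as the infinite product, evaluate at $t=a^N$ so that the first $N$ factors equal $1$, and bound the tail below by $\prod_{m\ge1}\cos(\pi a^{-m})>0$. The only cosmetic difference is that the paper verifies positivity of this product by comparing with the explicit Vieta-type product $\prod_{n\ge2}\cos^2(\pi/2^n)=4/\pi^2$, whereas you invoke the standard convergence criterion $\sum_m(1-\cos(\pi a^{-m}))<\infty$; both are fine.
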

\begin{proof}
By the multiplication rule of Fourier transform of measures, it is easy to see that
\[\widehat{\mu_a}(t)=\prod\limits_{n=1}^{\infty}\left(\frac{1}{2}(1+\phi(n))+\frac{1}{2}(1-\phi(n))e(a^{-n}t)\right).\]
Hence \begin{eqnarray*}
   % \nonumber to remove numbering (before each equation)
    |\widehat{\mu_a}(t)|^2&=&\prod\limits_{n=1}^{\infty}\left(\frac{1}{2}(1+\phi^2(n))+\frac{1}{2}(1-\phi^2(n))\cos{2\pi a^{-n}t}\right)\\
     &=&\prod\limits_{n=1}^{\infty}\left(\phi^2(n)\sin^2(\pi a^{-n}t)+\cos^2(\pi a^{-n}t)\right)\\
     &\geq&\prod\limits_{n=1}^{\infty}\cos^2(\pi a^{-n}t).
   \end{eqnarray*}
  Choosing $t$ along the sequence $(a^{k})_{k\geq 1}$,
   \begin{eqnarray*}
   % \nonumber to remove numbering (before each equation)
     |\widehat{\mu_a}(a^{k})|^2 &\geq&\prod\limits_{n=1}^{\infty}\cos^2{\pi a^{-n+k}}=\prod\limits_{n=1}^{\infty}\cos^2{a^{-n}\pi}\geq \cos^2{\frac{\pi}{a}}\prod\limits_{n=2}^{\infty}\cos^2{\frac{\pi}{2^n}}=\frac{4}{\pi^2}\cos^2{\frac{\pi}{a}}>0.\\
\end{eqnarray*}
   \end{proof}
 \par By virtue of the above proposition, we only consider the case $a=2$ in what follows. The following equation will be repeatedly used
\begin{equation}\label{dk}
\widehat{\mu}(t)=\prod\limits_{n=1}^{\infty}\left(\frac{1}{2}(1+\phi(n))+\frac{1}{2}(1-\phi(n))e(2^{-n}t)\right).
\end{equation}
So
\begin{equation}\label{dks}
|\widehat{\mu}(t)|^2 =\prod\limits_{n=1}^{\infty}(\phi^2(n)\sin^2{\pi2^{-n}t}+\cos^2{\pi2^{-n}t}).
\end{equation}
In order to estimate the Fourier decay rate, we define
 \[A_{\phi}:=\left\{t:\ \parallel t\parallel<2^{-K_{\phi}}\right\},\] where $K_{\phi}$ is a positive number which depends on $\phi$.
\par Let \[R_n=\left\{t\in(0,\infty):\ |t-\frac{k}{2}|<2^{-K_{\phi}-n},\text{ for some odd $k$ }\right\}\]
be the set of all points which are at a distance at least $\frac{1}{2}-2^{-K_{\phi}-n}$ from the nearest integer. Clearly if the point $t$ falls into the set $R_n$, the value of $\cos^2{\pi t}$ is small. We now have the following easy lemma.
 \begin{lemma}\label{gx}
\begin{enumerate}[1)]
\item If $t\notin A_{\phi}$, then there exists a positive $\delta=\delta_{\phi}<1$ such that
 \[|\phi^2(n)\sin^2 \pi t+\cos^2 \pi t|<\delta<1, \text{ for any}\ n\geq 1.\]
\item If $t\in R_n$, then $$|\cos \pi t|\leq \pi2^{-n-K_{\phi}}.$$
\end{enumerate}
\end{lemma}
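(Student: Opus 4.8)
The plan is to treat the two parts separately, each by a short direct computation.

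For part 1), I would first use the monotonicity of $\phi$ to replace $\phi(n)$ by the uniform bound $\phi(1)$: since $0<\phi(n)\le\phi(1)<1$ for every $n$, it suffices to bound the larger quantity $\phi^2(1)\sin^2\pi t+\cos^2\pi t$ away from $1$. Rewriting it as $1-(1-\phi^2(1))\sin^2\pi t$ shows that the whole matter reduces to bounding $\sin^2\pi t$ away from $0$ on the complement of $A_\phi$. This is where the definition of $A_\phi$ enters: if $t\notin A_\phi$ then $\|t\|\ge 2^{-K_\phi}$, and (adopting the convention $K_\phi\ge 1$, so that $2^{-K_\phi}\le \tfrac12$) the monotonicity of $s\mapsto\sin\pi s$ on $[0,\tfrac12]$ gives
\[
\sin^2\pi t=\sin^2(\pi\|t\|)\ \ge\ \sin^2(\pi 2^{-K_\phi})\ >\ 0 .
\]
Hence $0\le \phi^2(n)\sin^2\pi t+\cos^2\pi t\le 1-(1-\phi^2(1))\sin^2(\pi 2^{-K_\phi})=:\delta_\phi<1$, where $\delta_\phi$ depends only on $\phi$ (through $\phi(1)$ and $K_\phi$); replacing $\delta_\phi$ by any $\delta\in(\delta_\phi,1)$, e.g.\ $\delta=\tfrac12(1+\delta_\phi)$, makes the inequality strict, and the non-negativity of the expression disposes of the absolute value.

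For part 2), I would expand $\pi t$ around the nearest half-integer. If $t\in R_n$, write $t=\tfrac{k}{2}+\theta$ with $k$ odd and $|\theta|<2^{-K_\phi-n}$. Since $k$ is odd, $\cos\tfrac{k\pi}{2}=0$ and $\sin\tfrac{k\pi}{2}=\pm1$, so the angle–addition formula collapses $\cos\pi t$ to $\mp\sin\pi\theta$, and the elementary inequality $|\sin x|\le|x|$ finishes the estimate
\[
|\cos\pi t|=|\sin\pi\theta|\le \pi|\theta|<\pi 2^{-n-K_\phi}.
\]

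Neither step poses a genuine obstacle — this lemma is essentially a warm-up computation to be used repeatedly later. The only points requiring a little care are fixing the normalisation $K_\phi\ge 1$ (so that $A_\phi$ really is a small neighbourhood of the integers on which $\sin$ is monotone) and recording explicitly that the constant $\delta_\phi$ produced in part 1) is strictly below $1$ precisely because $\phi(1)<1$ and $\sin(\pi 2^{-K_\phi})\ne 0$.
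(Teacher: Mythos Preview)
Your proof is correct and follows essentially the same approach as the paper: for part 1) the paper rewrites the expression as $1-(1-\phi^2(n))\sin^2\pi t$, replaces $\phi(n)$ by $\phi(1)$ via monotonicity, and bounds $\sin^2\pi t\ge \sin^2(\pi 2^{-K_\phi})$ from the definition of $A_\phi$; for part 2) the paper writes $|\cos\pi t|\le |\sin(\pi 2^{-K_\phi-n})|\le \pi 2^{-K_\phi-n}$, which is exactly your angle–addition computation in compressed form. Your extra care in adjusting $\delta$ upward to get a strict inequality and in noting the normalisation $K_\phi\ge 1$ are minor refinements that the paper glosses over.
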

 \begin{proof}
 \begin{enumerate}[1)]
 \item It is obvious that
 \begin{eqnarray*}
 % \nonumber to remove numbering (before each equation)
|\phi^2(n)\sin^2 \pi t+\cos^2 \pi t|&=&|1-(1-\phi^2(n))\sin^2 \pi t|\\
  &\leq&1-(1-\phi^2(1))\sin^2\pi 2^{-K_{\phi}}\\
  &=&\delta<1.
 \end{eqnarray*}

 \item by the definition of $R_n$,

$$|\cos \pi t| \leq |\sin(\pi2^{-K_{\phi}-n})|\leq \pi2^{-K_{\phi}-n}.$$
 \end{enumerate}
 \end{proof}

\par Now let $t\geq2$ be fixed, suppose that $m\geq1$ is the unique integer such that
\[2^{m}\leq t<2^{m+1}.\]
Consider the set
$$\mathscr{A}:=\{1\leq n\leq m: 2^{-n}t\in A_{\phi}\},\quad k:=\sharp(\mathscr{A}).$$
That is, there exist exactly $k\ (k\leq m)$ indices in $\mathscr{A}$ for which $2^{-n}t\in A_{\phi}$ among the indices of $1,2,\cdots,m$.
 We split $\mathscr{A}$  into a disjoint union  $\bigcup\limits_{i=1}^{j}B_i$, where
 $B_i:=\{n_i-l_i, n_i-l_i-1,\cdots, n_i-1\}$,
being a block of $l_i$ consecutive integers, and
    \begin{equation}\label{2.1}
\sum\limits_{i=1}^{j}l_i=k.
\end{equation}
Note that $n_i$ (the $i$-th good index) is the first index which does not fall into $A_{\phi}$ among the integers between $B_i$ and $B_{i+1}$.
Obviously,
\begin{equation}\label{2.2}
n_i\geq\sum\limits_{s=1}^{i}l_s+1.
\end{equation}
 The following combinatorial lemma is key in the proof of Theorem \ref{thm1.1}.
\begin{lemma}\label{lem2.2}
Let $n_i, l_i$ be as above. Then we have $2^{-n_i}t\in R_{l_i}$.
\end{lemma}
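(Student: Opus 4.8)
The plan is to set $x:=2^{-n_i}t$ and to study the integers nearest to its dyadic dilates $2x,4x,\dots,2^{l_i}x$. Since $B_i=\{n_i-l_i,\dots,n_i-1\}$ lies entirely in $\mathscr{A}$ while $2^{-n_i}t\notin A_\phi$ by the choice of $n_i$, the facts available to us are
\[
\|2^{s}x\|=\|2^{-(n_i-s)}t\|<2^{-K_\phi}\quad(1\le s\le l_i),\qquad \|x\|\ge 2^{-K_\phi}.
\]
Write $a^{(s)}$ for the integer nearest to $2^{s}x$, so that $|2^{s}x-a^{(s)}|=\|2^{s}x\|<2^{-K_\phi}$ for $1\le s\le l_i$.

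First I would note that $a^{(1)}$ must be odd: otherwise $a^{(1)}/2\in\mathbb{Z}$ and $|x-a^{(1)}/2|=\frac{1}{2}\|2x\|<2^{-K_\phi-1}<2^{-K_\phi}$, contradicting $\|x\|\ge 2^{-K_\phi}$. The core step is then the claim that $a^{(s)}=2^{s-1}a^{(1)}$ for every $1\le s\le l_i$, which I would prove by induction on $s$: for $1\le s\le l_i-1$,
\[
|a^{(s+1)}-2a^{(s)}|\le\|2^{s+1}x\|+2\|2^{s}x\|<3\cdot 2^{-K_\phi}<1
\]
(the last inequality because $2^{-K_\phi}<1/3$, which we may assume), and two integers at distance less than $1$ coincide, so $a^{(s+1)}=2a^{(s)}$. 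Taking $s=l_i$ and dividing $\|2^{l_i}x\|=|2^{l_i}x-2^{l_i-1}a^{(1)}|<2^{-K_\phi}$ through by $2^{l_i}$ yields $|x-a^{(1)}/2|<2^{-K_\phi-l_i}$; since $a^{(1)}$ is odd, this is precisely the assertion $2^{-n_i}t\in R_{l_i}$.

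Everything above is elementary; the only point requiring care is the quantitative inequality $3\cdot 2^{-K_\phi}<1$, which is what forces consecutive nearest integers to double exactly, and is the reason $K_\phi$ cannot be taken too small. Conceptually, the crux is the observation that the conditions ``$2x,\dots,2^{l_i}x$ all lie near integers while $x$ itself does not'' pin those integers down to the geometric progression $a^{(1)},2a^{(1)},\dots,2^{l_i-1}a^{(1)}$ with odd leading term, so that the minute quantity $\|2^{l_i}x\|<2^{-K_\phi}$ is amplified by the factor $2^{l_i-1}$ when pulled back to $x$.
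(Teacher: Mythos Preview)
Your proof is correct and follows essentially the same route as the paper's: both arguments observe that the $l_i$ consecutive approximations by integers force a doubling relation $a^{(s+1)}=2a^{(s)}$ via the bound $3\cdot 2^{-K_\phi}<1$, and then pull back the inequality at level $s=l_i$ by a factor $2^{l_i}$. The only cosmetic differences are that you work with nearest integers rather than integer parts and establish the oddness of $a^{(1)}$ at the outset (by contradiction with $\|x\|\ge 2^{-K_\phi}$), whereas the paper deduces it at the end by juxtaposing the inequalities $|x-k_1/2|<2^{-K_\phi-l_i}$ and $|x-k_0|\ge 2^{-K_\phi}$; your ordering is arguably cleaner.
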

\begin{proof}
 By the definition of  $B_i$ and $n_i$,
\[2^{-n_i}t\notin A_{\phi}, 2^{-(n_i-1)}t\in A_{\phi}, 2^{-(n_i-2)}t\in A_{\phi},\cdots, 2^{-(n_i-l_i)}t\in A_{\phi}.\]
Note that
\begin{equation}\label{2.3}
  \left\{
    \begin{aligned}
     &|2^{-n_i}t-k_0|\geq2^{-K_{\phi}}\\
   &|2^{-(n_i-1)}t-k_1|<2^{-K_{\phi}}\\
   &|2^{-(n_i-2)}t-k_2|<2^{-K_{\phi}}\\
   &\cdots\cdots\cdots\cdots\cdots\cdots\cdots\\
 &|2^{-(n_i-l_i)}t-k_{l_i}|<2^{-K_{\phi}}\\
    \end{aligned}
   \right.
 \end{equation}
where $k_j (0\leq j\leq l_i)$ is the integral part of $2^{-(n_i-j)}t (0\leq j\leq l_i)$ respectively,
we claim that $k_{l_i}=2^{l_i-1}k_1$.
Indeed, consider the two equations
\begin{align*}
    2^{-(n_i-1)}t=k_{1}+\varepsilon_{1} \ \ \text{and}\ \  2^{-(n_i-2)}t=k_{2}+\varepsilon_{2},
\end{align*}
where $\mid\varepsilon_{1}\mid <2^{-K_{\phi}},\mid\varepsilon_{2}\mid<2^{-K_{\phi}}$, we have
\begin{equation}\label{x}
|k_{1}-2k_{2}|=|\varepsilon_{1}-2 \varepsilon_{2}|< 3\cdot2^{-K_{\phi}}\leq 1.
\end{equation}
Since the left-hand side of \eqref{x} is an integer, it follows that
$k_{1}=2k_{2}$.
Similarly, if there are $l_i$ consecutive integers which fall into $A_{\phi}$, i.e.,
$\mid\varepsilon_{j}\mid <2^{-K_{\phi}} \text{ for all } 1\leq j\leq l_i$, then
\begin{equation}\label{2.5}
k_{l_i}=2^{l_i-1}k_1.
\end{equation}
Substituting \eqref{2.5} into the last inequality of \eqref{2.3},
  \begin{equation}\label{2.6}
   |2^{-n_i}t-k_1/2|<2^{-K_{\phi}-l_i}.
   \end{equation}
 On the other hand,
   \begin{equation}\label{2.7}
   |2^{-n_i}t-k_0|\geq2^{-K_{\phi}}.
   \end{equation}
    Combining \eqref{2.6} with \eqref{2.7}, it is easy to see that $k_1$ is odd, thus $ 2^{-n_i}t\in R_{l_i}$.
\end{proof}

\par Armed with the combinatorial Lemma \ref{lem2.2}, we have the following lemma which is useful in the estimate of the Fourier decay.
\begin{lemma}\label{lem3.1}
\[|{\widehat\mu}(t)|^2\leq\delta^{m-k-j}\prod_{i=1}^{j}(\phi^2(n_i)+\pi^22^{-2(l_i+K_{\phi})}).\]
\end{lemma}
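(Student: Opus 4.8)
The plan is to read off the bound directly from the product formula \eqref{dks}, estimating the factor attached to each index $n\in[1,m]$ according to the combinatorial decomposition of $[1,m]$ just introduced. First I would truncate: since $\phi(n)\in(0,1)$, every factor obeys
\[\phi^2(n)\sin^2\pi 2^{-n}t+\cos^2\pi 2^{-n}t=1-(1-\phi^2(n))\sin^2\pi 2^{-n}t\le 1,\]
so discarding the tail $n>m$ gives $|\widehat\mu(t)|^2\le\prod_{n=1}^{m}\bigl(\phi^2(n)\sin^2\pi 2^{-n}t+\cos^2\pi 2^{-n}t\bigr)$.

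Next I would split $[1,m]$ into three pairwise disjoint sets and bound the factors over each. (i) The $k=\sum_{i=1}^{j}l_i$ indices belonging to $\mathscr{A}=\bigcup_{i=1}^{j}B_i$: here I just use the trivial bound $\le 1$. (ii) The $j$ good indices $n_1,\dots,n_j$: these are distinct and lie outside $\mathscr{A}$ by construction, and Lemma \ref{lem2.2} gives $2^{-n_i}t\in R_{l_i}$; part 2) of Lemma \ref{gx} then yields $|\cos\pi 2^{-n_i}t|\le\pi 2^{-n_i-K_{\phi}}\le\pi 2^{-l_i-K_{\phi}}$, so together with $\sin^2\le 1$ the corresponding factor is at most $\phi^2(n_i)+\pi^2 2^{-2(l_i+K_{\phi})}$. (iii) The remaining $m-k-j$ indices: each such $n$ lies outside $\mathscr{A}$, i.e.\ $2^{-n}t\notin A_{\phi}$, so part 1) of Lemma \ref{gx} bounds its factor by $\delta<1$. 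Multiplying the three groups of estimates produces exactly
\[|\widehat\mu(t)|^2\le\delta^{m-k-j}\prod_{i=1}^{j}\bigl(\phi^2(n_i)+\pi^2 2^{-2(l_i+K_{\phi})}\bigr),\]
which is the claim.

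The only genuine subtlety is the bookkeeping linking (ii) and (iii): one should check that the good indices $n_1,\dots,n_j$ actually lie in $[1,m]$, so that the number of ``purely bad'' indices really is $m-k-j$. If the last block $B_j$ happens to reach up to $m$, then $n_j=m+1>m$; but in that case there are only $m-k-(j-1)$ purely bad indices in $[1,m]$, the factor for $n=m+1$ is still $\le\phi^2(n_j)+\pi^2 2^{-2(l_j+K_{\phi})}$ by Lemma \ref{lem2.2}, and since $\delta<1$ the resulting bound $\delta^{m-k-j+1}\prod_{i=1}^{j}(\cdots)$ is no worse than the stated one. So in every case the inequality holds, and apart from this accounting the lemma is an immediate consequence of Lemmas \ref{gx} and \ref{lem2.2}.
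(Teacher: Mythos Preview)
Your proof is correct and follows essentially the same approach as the paper: bound each factor of \eqref{dks} according to whether the index is in $\mathscr{A}$, is a good index $n_i$, or is one of the remaining $m-k-j$ bad indices, invoking Lemmas~\ref{gx} and~\ref{lem2.2} exactly as the paper does. One small slip: in step (ii) Lemma~\ref{gx}(2) applied to $2^{-n_i}t\in R_{l_i}$ gives $|\cos\pi 2^{-n_i}t|\le\pi 2^{-l_i-K_\phi}$ directly, not $\pi 2^{-n_i-K_\phi}$; your intermediate expression is unjustified, though the final bound you use is the correct one, and your discussion of the edge case $n_j=m+1$ is more careful than the paper's.
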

\begin{proof}
Recall the equation \eqref{dks},
\begin{eqnarray*}
   % \nonumber to remove numbering (before each equation)
    |{\widehat\mu}(t)|^2&=&\prod\limits_{n=1}^{\infty}(\phi^2(n)\sin^2{\pi2^{-n}t}+\cos^2{\pi2^{-n}t})\\
     &\leq&\delta^{m-k-j}\prod_{i=1}^{j}(\phi^2(n_i)+\pi^22^{-2(l_i+K_{\phi})}).
\end{eqnarray*}
The reason of the last inequality is as follows:
Recall that there exist exactly $m-k$ values of $n\ (n=1,2,\cdots,m)$ such that $2^{-n}t$ is not in $A_{\phi}$. Among these indices, there are
$j$ good indices $n_k,1\leq k\leq j$. For theses good indices, by Lemma \ref{gx} and Lemma \ref{lem2.2},
\[\phi^2(n_i)\sin^2{\pi2^{-n_i}t}+\cos^2{\pi2^{-n_i}t}\leq\phi^2(n_i)+\pi^22^{-2(l_i+K_{\phi})}.\]
 The remaining $m-k-j\ (\geq0)$ factors are less than $\delta$ from Lemma \ref{gx}.
Finally, all other factors are replaced by 1.
\end{proof}
\begin{rem}
We can obtain the lower bound of the Fourier decay along some subsequence.
Let $t=2^m$, we claim that
$|{\widehat\mu}(2^m)|^2\geq\frac{4}{\pi^2}\phi^2(m+1)$.
Indeed,
\begin{eqnarray*}
% \nonumber to remove numbering (before each equation)
  |{\widehat\mu}(2^m)|^2&=&\prod\limits_{n=1}^{\infty}(\phi^2(n)\sin^2{2^{-n+m}\pi}+\cos^2{2^{-n+m}\pi})\\
  &=&\phi^2(m+1)\prod_{n=m+2}^{\infty}(\phi^2(n)\sin^2{2^{-n+m}\pi}+\cos^2{2^{-n+m}\pi})\\
  &\geq&\phi^2(m+1)\prod_{n=2}^{\infty}\cos^2{2^{-n}\pi}=\frac{4}{\pi^2}\phi^2(m+1).
\end{eqnarray*}
\end{rem}

\par Next, in order to prove Theorem \ref{thm1.2}, we need the following lemma.
\begin{lemma}\label{lem2.1}
Let $a,b$ be two positive real numbers. Then
 \[|a+be(t)|\leq a+b-4\min\{a,b\}\|t \|^2.\]
 \end{lemma}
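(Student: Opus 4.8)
The plan is to prove the inequality $|a+be(t)| \le a+b-4\min\{a,b\}\|t\|^2$ by a direct computation, reducing to a one-variable estimate on $\cos 2\pi t$. First I would square both sides (both are nonnegative since $a,b>0$): it suffices to show
\[
|a+be(t)|^2 = a^2+b^2+2ab\cos 2\pi t \le \bigl(a+b-4\min\{a,b\}\|t\|^2\bigr)^2.
\]
Since $\cos 2\pi t = 1 - 2\sin^2\pi t$ and $(a+b)^2 = a^2+b^2+2ab$, the left side equals $(a+b)^2 - 4ab\sin^2\pi t$. So the claim becomes
\[
(a+b)^2 - 4ab\sin^2\pi t \le (a+b)^2 - 8(a+b)\min\{a,b\}\|t\|^2 + 16\min\{a,b\}^2\|t\|^4,
\]
i.e. it suffices to prove
\[
8(a+b)\min\{a,b\}\|t\|^2 \le 4ab\sin^2\pi t + 16\min\{a,b\}^2\|t\|^4.
\]

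The key elementary fact is that $\sin^2 \pi t = \sin^2 \pi\|t\|$ and $\|\sin\pi\|t\|\,| \ge 2\|t\|$ for $\|t\|\in[0,\tfrac12]$ (Jordan's inequality, since $\|t\| \le \tfrac12$). Hence $\sin^2\pi t \ge 4\|t\|^2$, and therefore $4ab\sin^2\pi t \ge 16ab\|t\|^2$. So it is enough to check
\[
8(a+b)\min\{a,b\}\|t\|^2 \le 16ab\|t\|^2 + 16\min\{a,b\}^2\|t\|^4.
\]
Dividing by $8\|t\|^2$ (the case $\|t\|=0$ being trivial), this reduces to $(a+b)\min\{a,b\} \le 2ab + 2\min\{a,b\}^2\|t\|^4$, and since the last term is nonnegative it suffices to have $(a+b)\min\{a,b\} \le 2ab$. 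Writing $m=\min\{a,b\}$, $M=\max\{a,b\}$, this is $(m+M)m \le 2mM$, i.e. $m^2 \le mM$, i.e. $m \le M$, which holds by definition. This closes the argument.

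The only subtlety — and the step I would flag as the place to be careful rather than a genuine obstacle — is making sure the chain of reductions is valid when the right-hand side $a+b-4\min\{a,b\}\|t\|^2$ could a priori be negative, in which case squaring is not reversible. But $\|t\|\le \tfrac12$ gives $4\min\{a,b\}\|t\|^2 \le \min\{a,b\} \le a+b$, so the right-hand side is always $\ge 0$ and squaring is legitimate. I would state this observation explicitly at the start. Everything else is routine trigonometry (the half-angle identity and Jordan's inequality) plus the trivial bound $\min\{a,b\}\le\max\{a,b\}$, so no step requires real ingenuity; the proof is just a careful unwinding of the squared inequality.
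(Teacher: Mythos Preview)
Your proof is correct and follows essentially the same route as the paper: both compute $|a+be(t)|^2=(a+b)^2-4ab\sin^2\pi t$, invoke the Jordan-type bound $\sin^2\pi t=\sin^2\pi\|t\|\ge 4\|t\|^2$, and reduce to the trivial inequality $\min\{a,b\}\le\max\{a,b\}$. The only cosmetic difference is that the paper factors $(a+b)^2-|a+be(t)|^2$ as $(a+b+|a+be(t)|)(a+b-|a+be(t)|)$ and bounds the first factor by $4\max\{a,b\}$, whereas you square the target inequality directly and expand; your explicit check that the right-hand side is nonnegative before squaring is a nice touch the paper omits.
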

\begin{proof}
Without loss of generality, we may assume that $a\geq b$, otherwise, replace $t$ by $-t$.
Thus it suffices to prove $a+b-|a+be(t)|\geq 4b \|t\|^2$.
Using the basic inequality $$\sin^2 t\geq \frac{4}{\pi^2}t^2,\ \text{for all } -\frac{\pi}{2}\leq t\leq \frac{\pi}{2}.$$
Since $0\leq\| t\|\leq \frac{1}{2}$, substituting $t$ for $\pi\| t\|$, we obtain
\begin{equation}\label{bds}
\sin^2 \pi t=\sin^2 \pi \| t\|\geq 4 \| t\|^2.
\end{equation}
It is clear that
\begin{equation}\label{bds1}
(a+b+| a+be(t)|)(a+b-| a+be(t)|)=4ab\sin^2 \pi t.
\end{equation}
Since $a,b$ are positive numbers,
\begin{equation}\label{bds2}
a+b+|a+be(t)|\leq4a.
\end{equation}
By \eqref{bds} and \eqref{bds1}, we get
\begin{equation}\label{bds3}
(a+b+| a+be(t)|)(a+b-|a+be(t)|)\geq 16ab\| t\|^2\geq 4a4b\| t\|^2.
\end{equation}
Combing \eqref{bds2} with \eqref{bds3},
we complete the proof.
\end{proof}
\par Using the monotonic property of $\phi$, we obtain an immediate application.
\begin{corollary}\label{tl1}
\[|\frac{1}{2}(1+\phi(n))+\frac{1}{2}(1-\phi(n))e(2^{-n}t)|\leq 1-2(1-\phi(1))\|2^{-n}t\|^2.\]
\end{corollary}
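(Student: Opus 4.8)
The plan is to read this off directly from Lemma \ref{lem2.1} by making the substitution $a=\tfrac12(1+\phi(n))$, $b=\tfrac12(1-\phi(n))$ and replacing the variable $t$ in that lemma by $2^{-n}t$. With these choices one has $a+b=1$, so the right-hand side of Lemma \ref{lem2.1} becomes $1-4\min\{a,b\}\,\|2^{-n}t\|^2$, and the left-hand side is precisely the expression appearing in the corollary. Hence the whole statement reduces to identifying $\min\{a,b\}$.

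The first step is therefore to observe that, since $\phi:\mathbb N\to(0,1)$, we have $1-\phi(n)>0$ and $1+\phi(n)>1-\phi(n)$, so $b\le a$ and $\min\{a,b\}=b=\tfrac12(1-\phi(n))$. Plugging this into the bound from Lemma \ref{lem2.1} gives
\[
\left|\tfrac12(1+\phi(n))+\tfrac12(1-\phi(n))e(2^{-n}t)\right|\le 1-4\cdot\tfrac12(1-\phi(n))\,\|2^{-n}t\|^2=1-2(1-\phi(n))\,\|2^{-n}t\|^2.
\]

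The second and final step is to use the monotonicity hypothesis on $\phi$: since $\phi$ is decreasing, $\phi(n)\le\phi(1)$, hence $1-\phi(n)\ge 1-\phi(1)\ge 0$, so that $-2(1-\phi(n))\,\|2^{-n}t\|^2\le -2(1-\phi(1))\,\|2^{-n}t\|^2$. Substituting this into the previous display yields the claimed inequality. There is no real obstacle here; the only points requiring a moment's care are the comparison $b\le a$ (so that the correct term is extracted from $\min\{a,b\}$) and the direction of the monotonicity estimate, both of which are immediate from $\phi$ taking values in $(0,1)$ and being nonincreasing.
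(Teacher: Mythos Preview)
Your proof is correct and follows exactly the approach the paper intends: the corollary is stated as an immediate application of Lemma~\ref{lem2.1} using the monotonicity of $\phi$, which is precisely what you carry out by setting $a=\tfrac12(1+\phi(n))$, $b=\tfrac12(1-\phi(n))$ and then replacing $\phi(n)$ by $\phi(1)$.
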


\section{Proofs of Theorem \ref{thm1.1} and Theorem \ref{thm1.2}}

In this section, we shall prove Theorem \ref{thm1.1} and Theorem \ref{thm1.2}.\\
\textbf{Proof of Theorem \ref{thm1.1}:}
\begin{enumerate}[(1)]
\item By Lemma \ref{lem3.1},
   \begin{equation}\label{g}
 |{\widehat\mu}(t)|^2\leq\delta^{m-k-j}\prod_{i=1}^{j}(\phi^2(n_i)+\pi^22^{-2(l_i+K_{\phi})}).
  \end{equation}
  By the assumption of $\dfrac{\phi(n)}{\phi(n+1)}<2$, it follows that
    \begin{equation}\label{3.1}
  \phi(1+l_1)<2^{l_2}\phi(1+l_1+l_2).
  \end{equation}
    Since $\phi(n)$ is non-increasing, along with \eqref{2.2},
\begin{equation}\label{3.2}
  \phi(n_i)\leq\phi(1+l_1+l_2+\cdots +l_i).
  \end{equation}
 Now we estimate the right-hand side of \eqref{g} by induction on $j$. When $j=2$, we have

 \[\prod_{i=1}^{2}(\phi^2(n_i)+\pi^22^{-2(l_i+K_{\phi})})=(\phi^2(n_1)+\pi^22^{-2(l_i+K_{\phi})})(\phi^2(n_2)+\pi^22^{-2(l_i+K_{\phi})}).\]
 Let $K_{\phi}$ be large enough so that
$$\phi^2(2)+\pi^24^{-1-K_{\phi}}+\pi^24^{-K_{\phi}}\leq 1\ \text{and}\quad\pi2^{-K_{\phi}}\leq1.$$
Here, we may choose
 $$K_{\phi}=K_{\phi,1}:=\frac{1}{2}\log_{2}\frac{5\pi^2}{4(1-\phi^2(2))}.$$
In view of \eqref{3.1} and \eqref{3.2}, and using the monotonic property of $\phi$, it follows that
   \begin{eqnarray*}
 % \nonumber to remove numbering (before each equation)
  \prod_{i=1}^{2}(\phi^2(n_i)+\pi^22^{-2(l_i+K_{\phi})})&=&(\phi^2(n_1)+\pi^22^{-2(l_1+K_{\phi})})(\phi^2(n_2)+\pi^22^{-2(l_2+K_{\phi})}) \\
  &\leq&(\phi^2(2)+\pi^24^{-1-K_{\phi}}+\pi^24^{-K_{\phi}})\phi^2(1+l_1+l_2)\\
  &+&\pi^42^{-2(l_1+l_2+2K_{\phi})}\\
  &\leq&\phi^2(1+l_1+l_2)+\pi^22^{-2(l_1+l_2+K_{\phi})}.
  \end{eqnarray*}

 By induction, we  have
 \begin{eqnarray*}
 % \nonumber to remove numbering (before each equation)
  \prod_{i=1}^{j}(\phi^2(n_i)+\pi^22^{-2(l_i+K_{\phi})})&\leq&\phi^{2}(1+l_1+l_2+\cdots+l_j)+\pi^22^{-2(l_1+l_2+\cdots+l_j+K_\phi)}  \\
  &\leq&\phi^{2}(1+k)+\pi^22^{-2(k+K_\phi)}\\
  &\leq&(1+\frac{\pi^2}{4^{K_\phi}\phi^{2}(1)})\phi^{2}(k).
  \end{eqnarray*}
Hence,
\begin{equation}\label{3.3}
 |{\widehat\mu}(t)|^2\leq \delta^{m-k-j}\prod_{i=1}^{j}(\phi^2(n_i)+\pi^22^{-2(l_i+K_{\phi})})\leq C\delta^{m-k-j}\phi^{2}(k),
  \end{equation}

where $C=1+\pi^24^{-K_\phi}\phi^{-2}(1)$ is a constant depending on $\phi$.
\par In the remainder we estimate the right-hand of inequality \eqref{3.3} under the condition that
\begin{equation}\label{fg}
m-j-k\geq0\ \text{and}\ j\leq k\leq m.
 \end{equation}
Let $0<\gamma_1:=\frac{-\log_2\delta}{2}<1$, we distinguish two cases,
\begin{enumerate}[(a)]
  \item if $k\geq\gamma_1 m$, in view of \eqref{fg} and the fact that $\phi$ is non-increasing,
it is clear that
\[\delta^{m-k-j}\phi^{2}(k)\leq\delta^{0}\phi^{2}(\lceil\gamma_1 m\rceil)=\phi^{2}(\lceil\gamma_1 m\rceil).\]
  \item if $k<\gamma_1 m$, then
  \begin{equation*}
   \delta^{m-k-j}\phi^{2}(k)\leq\delta^{m-2k}\phi^{2}(k)\leq\delta^{m-k/{\gamma_1}}\phi^{2}(k)
  =\phi^{2}(\lceil\gamma_1 m\rceil)\frac{\phi^{2}(k)\delta^{m}}{\phi^{2}(\lceil\gamma_1 m\rceil)\delta^{\frac{k}{\gamma_1}}}
\end{equation*}
\end{enumerate}
On the assumption of $0<\phi(n)<2\phi(n+1)$, and the definition of $\gamma_1$,
we obtain
\begin{eqnarray*}
\frac{\phi^{2}(k)\delta^{m}}{\phi^{2}(\lceil\gamma_1 m\rceil)\delta^{\frac{k}{\gamma_1}}}&\leq&\frac{\phi^{2}(k)\delta^{\frac{\lceil\gamma_1 m\rceil-1}{\gamma_1}}}{\phi^{2}(\lceil\gamma_1 m\rceil)\delta^{\frac{k}{\gamma_1}}}=\delta^{\frac{\lceil\gamma_1 m\rceil-k}{\gamma_1}}
\frac{\phi^{2}(k)}{\phi^{2}(\lceil\gamma_1 m\rceil)}\delta^{-\frac{1}{\gamma_1}}\\
&\leq&\delta^{\frac{\lceil\gamma_1 m\rceil-k}{\gamma_1}}\cdot4^{\lceil\gamma_1 m\rceil-k}\cdot\delta^{-\frac{1}{\gamma_1}}
=(4\delta^\frac{1}{\gamma_1})^{\lceil\gamma_1 m\rceil-k}\cdot\delta^{-\frac{1}{\gamma_1}}\leq 4.
\end{eqnarray*}
In view of  (a) and (b), we have
\[|{\widehat\mu}(t)|^2\leq4\phi^{2}(\lceil\gamma_1 m\rceil).\]
Recall that
$$2^{m}\leq t<2^{m+1},$$
 we obtain
\begin{equation}\label{3.7}
|{\widehat\mu}(t)|\leq2\phi(\lceil\gamma_1\log_2|t|\rceil).
\end{equation}
This completes the proof $(1)$.
\item To prove $(2)$, we choose $K_{\phi}$ large enough so that
$$\pi^42^{-4K_{\phi}}+\pi^22^{-2K_{\phi}}\phi^{2}(1)+\pi^22^{-2(1+K_{\phi})}\phi^{2}(1)\leq 1,$$
In fact, we can choose
 $K_{\phi}=K_{\phi,2}:=3$.
  From the non-increasing property of $\phi$ and the assumption of $\dfrac{\phi(n)}{\phi(n+1)}\geq2$,
 \begin{eqnarray*}
 % \nonumber to remove numbering (before each equation)
  \prod_{i=1}^{2}(\phi^2(n_i)+\pi^22^{-2(l_i+K_{\phi})})&=&(\pi^22^{-2(l_1+K_{\phi})}+\phi^2(n_1))(\pi^22^{-2(l_2+K_{\phi})}+\phi^2(n_2)) \\
  &\leq&\left(\pi^42^{-4K_{\phi}}+\pi^22^{-2K_{\phi}}\phi^{2}(1)+\pi^22^{-2(1+K_{\phi})}\phi^{2}(1)\right)2^{-2(l_1+l_2)}\\
  &+&\phi^{2}(1+l_1+l_2)\\
  &\leq&2^{-2(l_1+l_2)}+\phi^2(1+l_1+l_2).
  \end{eqnarray*}

Then by induction,
 \begin{eqnarray}\label{3.4}
 % \nonumber to remove numbering (before each equation)
  \nonumber\prod_{i=1}^{j}(\phi^2(n_i)+\pi^22^{-2(l_i+K_{\phi})})&\leq&2^{-2(l_1+l_2+\cdots+l_j)}+\phi^2(1+l_1+l_2+\cdots +l_j)\\
   &=&2^{-2k}+\phi^{2}(1+k)\leq(1+\phi^{2}(1))2^{-2k}.
 \end{eqnarray}
Using Lemma \ref{lem3.1}, it is obvious that
 \begin{eqnarray}\label{3.5}
      % \nonumber to remove numbering (before each equation)
        \nonumber|{\widehat\mu}(t)|^2&\leq&\delta^{m-k-j}\prod_{i=1}^{j}(\phi^2(n_i)+\pi^22^{-2(l_i+K_{\phi})})\\
        &\leq&C_1\delta^{m-k-j}2^{-2k},
 \end{eqnarray}
 where $C_1=(1+\phi^{2}(1))$ is a constant depending on $\phi$.\\
 Similarity to \eqref{3.3}, we distinguish two cases.
Let $0<\gamma_2:=\frac{-\log_2\delta}{2}<1$, from \eqref{fg}, we have
 \begin{enumerate}[(a)]
   \item if $k\geq\gamma_2 m$, then
\[\delta^{m-k-j}2^{-2k}\leq\delta^{0}2^{-2\gamma_2 m}=2^{-2\gamma_2 m}.\]
   \item if $k<\gamma_2 m$, then
  \begin{eqnarray*}
% \nonumber to remove numbering (before each equation)
   \delta^{m-k-j}2^{-2k}&\leq&\delta^{m-2k}2^{-2k}\leq\delta^{m-k/{\gamma_2}}2^{-2k}\\
  &\leq&2^{-2\gamma m}\frac{\delta^{m}2^{2\gamma_2 m}}{\delta^{k/{\gamma_2}}2^{2k}}\leq2^{-2\gamma_2 m}.
\end{eqnarray*}
\end{enumerate}
Combining (a) with (b), we get $$|{\widehat\mu}(t)|^2\leq C_2 2^{-2\gamma_2 m}.$$
Therefore we obtain that
\begin{equation}\label{3.6}
|{\widehat\mu}(t)|\leq C_2|t|^{-\gamma_2}.
\end{equation}
In view of \eqref{3.7} and \eqref{3.6}, the proof is complete.
\end{enumerate}
\textbf{Proof of theorem \ref{thm1.2}:} We distinguish two cases:
$\frac{\log b}{\log2}\in \mathbb{Q}$ and $\frac{\log b}{\log2}\notin \mathbb{Q}$ respectively.
\par Case one: $\frac{\log b}{\log2}\in \mathbb{Q}.$  We know that $x$ is normal for base $b$ if and only if $x$ is normal in base $2$, so it suffices
to prove that $\mu$ almost every $x$ is normal in base $2$, Since $\phi$ tends to $0$ at infinity, and from the aforementioned probability interpretation of the $\mu$, the random variables $X_n$ will have the same distribution asymptotically. By the strong law of large numbers, we can easily draw the conclusion.
\par Case two: $\frac{\log b}{\log2}\notin \mathbb{Q}$.  By Theorem \ref{del}, it suffices to prove that
$$\sum\limits^{N-1}_{m=0}\sum\limits^{N-1}_{n=0}|\widehat{\mu}(h(b^n-b^m))|=O(N^{2-\eta}),$$ for any non-zero integer $h$, where $0<\eta<1$ is a constant.
It is easy to see that
$$\sum\limits^{N-1}_{m=0}\sum\limits^{N-1}_{n=0}|\widehat{\mu}(h(b^n-b^m))|\leq 2 \sum\limits^{N-1}_{k=0}\sum\limits^{N-1}_{n=0}|\widehat{\mu}(h(b^k-1)b^n))|\leq 2N+2\sum\limits^{N-1}_{k=1}\sum\limits^{N-1}_{n=0}|\widehat{\mu}(h(b^k-1)b^n))|.$$
Thus, it is enough to prove
\begin{equation}\label{sblaa}
\sum\limits^{N-1}_{n=0}\mid\widehat{\mu}(h b^n)\mid=O(N^{1-\eta}),
\end{equation}
for any $h\in\mathbb{Z}\setminus\{0\}$.
\par Recalling \eqref{dks}, obverse that
\[|{\widehat\mu}(h2^\tau)|^2=\prod_{k=1}^{\infty}(\phi^2(k+\tau)\sin^2{\pi h2^{-k}}+\cos^2{\pi h2^{-k}}),\quad \text{for any nonnegative integer}\  \tau.\]
Assume that $b=b_02^{\tau_{0}}, h=h_02^{\tau_{1}}$, where $2\nmid b_0, 2\nmid h_0$, and $\tau_{0},\tau_{1}$ are two nonnegative integers,  then
\begin{eqnarray*}
   % \nonumber to remove numbering (before each equation)
    |{\widehat\mu}(hb^n)|^2&=&|{\widehat\mu}(h_02^{\tau_{1}}b_0^n2^{n\tau_{0}})|^2
     =\prod_{k=1}^{\infty}(\phi^2(k+\tau_{1}+n\tau_{0})\sin^2{\pi h_0b_0^n2^{-k}}+\cos^2{\pi h_0b_0^n2^{-k}})\\
     &\leq&\prod_{k=1}^{\infty}(\phi^2(k)\sin^2{\pi h_0b_0^n2^{-k}}+\cos^2{\pi h_0b_0^n2^{-k}})\\
       &\leq& |{\widehat\mu}(h_0b_0^n)|^2.
   \end{eqnarray*}
thus by \eqref{dk} and \eqref{sblaa}, it reduces to prove
\[\sum\limits^{N-1}_{n=0}\prod\limits_{k=1}^{\infty}\left(\frac{1}{2}(1+\phi(k))+\frac{1}{2}(1-\phi(k))e(h b^n 2^{-k})\right)=O(N^{1-\eta})\]
under the  condition that $2\nmid b,2\nmid h$.
\par First, we claim that for any $r\geq1$,
\begin{equation}\label{hgj}
\sum\limits^{2^r-1}_{n=0}\prod\limits_{k=1}^{\infty}\left(\frac{1}{2}(1+\phi(k))+\frac{1}{2}(1-\phi(k))e(h b^n 2^{-k})\right)=O(2^{r(1-\eta_0)}).
\end{equation}
\par Assume that $l$ is the largest integer such that $b\equiv 1\pmod{2^l}$, when $l=1$, note that $x$ is $b$ normal if and only if $x$ is $b^2$ normal, so we could choose $b=B^2$. If $l\geq2$, then
by the elementary number theory, $\{b^n,0\leq n\leq 2^r-1\}$ runs modulo $2^{l+r}$ through all residue classes which are congruent to $1$ modulo $2^{l}$.
Since $2\nmid h$, $\{hb^n,0\leq n\leq 2^r-1\}$ runs modulo $2^{l+r}$ through all residue classes which are congruent to $h$ modulo $2^{l}$. Thus
if we have the expansion
\[hb^n=\sum\limits_{k=0} d_k 2^k, d_k\in \{0,1\},\]
then r-strings
\begin{equation}\label{str}
d_{l}(n)d_{l+1}(n)\ldots d_{l+r-1}(n)
\end{equation}
take exactly one (and only one) of the $2^r$ possible sets of values as $n$ runs from $0$ to $2^r-1$ (this is first pointed out by Cassels, see \cite{Cass}).
 \par Now, we divide the indices $n(0\leq n\leq 2^r-1)$ into two cases, $\uppercase\expandafter{\romannumeral1}$ and $\uppercase\expandafter{\romannumeral2}$.
For convenience, setting $\mathcal{P}:=\{(0,1),(1,0)\}$ (we call $(0,1),(1,0)$ regular pairs). We say that the index $n$ belongs to case $\uppercase\expandafter{\romannumeral1}$ if in the string \eqref{str} there are at least $\varepsilon r(0<\varepsilon<\frac{1}{4})$ pairs of $(d_k,d_{k+1})$ belong to $\mathcal{P}$; otherwise $n$ is in case $\uppercase\expandafter{\romannumeral2}$.
\par For case $\uppercase\expandafter{\romannumeral1}$, if $(d_k,d_{k+1})\in \mathcal{P}$, then
$$\|hb^n2^{-k-2}\|\geq \frac{1}{4}.$$
Since case $\uppercase\expandafter{\romannumeral1}$ contains at most $2^r$ elements, by Corollary \ref{tl1}, we have
\begin{equation}\label{bds4}
\sum\limits_{n\in
\uppercase\expandafter{\romannumeral1}}\prod\limits_{k=1}^{\infty}\left(\frac{1}{2}(1+\phi(k))+\frac{1}{2}(1-\phi(k))e(h b^n 2^{-k})\right)\leq 2^r\delta_2^{\varepsilon r}=2^{r(1+\varepsilon\log\delta_2)}=2^{r(1-\eta_1)},
\end{equation}
where $\delta_2:=\frac{7}{8}+\frac{1}{8}\phi(1), \eta_1: =-\varepsilon\log(\frac{7}{8}+\frac{1}{8}\phi(1))\in (0,1)$.
\par For case $\uppercase\expandafter{\romannumeral2}$, we shall show that the number of $n$ such that there are less than $\epsilon r$ regular pairs in the string \eqref{str} is at most $2^{\gamma_3 r}$. It is easy to see that we only need to prove  the claim under the  condition that $k$ is odd. We know that in the string \eqref{str} the number of all combinations of the string such that there are exactly $s (s<\varepsilon r)$ indices belong to $\mathcal{P}$ is
$$\binom{\lceil r\rceil/2}{s}2^s2^{\lceil r\rceil/2-s}=\binom{\lceil r\rceil/2}{s}2^{\lceil r\rceil/2}.$$
Thus the number of $n$ for which there are less than $\varepsilon r$ regular pairs in the string \eqref{str} can not exceed
\begin{equation*}
  \sum\limits_{s=0}^{[\varepsilon r]}\binom{\lceil r\rceil/2}{s}2^{\lceil r\rceil/2}\leq \exp\big(h(\varepsilon) r)2^{\lceil r\rceil/2}\leq
  2^{(\frac{1}{2}+\rho(\varepsilon))r},
\end{equation*}
where $h(t)=-t\log t-(1-t)\log(1-t)$,  $\rho(\varepsilon)$ is a small constant which depending on $\varepsilon$, and the second inequality follows from Stirling's formula. Here $\rho(\varepsilon)\rightarrow 0$ as $\varepsilon\rightarrow 0$.
Consequently,
\begin{equation}\label{bds5}
\sum\limits_{n\in
\uppercase\expandafter{\romannumeral2}}\prod\limits_{k=1}^{\infty}|\left(\frac{1}{2}(1+\phi(k))+\frac{1}{2}(1-\phi(k))e(h b^n 2^{-k})\right)|\leq2^{(\frac{1}{2}+\rho(\varepsilon))r}:=2^{r(1-\eta_2)},
\end{equation}
where $\eta_2:=\frac{1}{2}-\rho(\varepsilon)$.
Together \eqref{bds4} with \eqref {bds5}, we obtain the claim \eqref{hgj}.
\par For general $N$, writing $N=2^{r_1}+2^{r_2}+\cdots+2^{r_k},r_1<r_2<\cdots<r_k$.
Divide $[1,N]$ into $k$ intervals $[0,2^{r_1}], [2^{r_1},2^{r_1}+2^{r_2}], \cdots, [2^{r_1}+2^{r_2}+\cdots+2^{r_{k-1}},N]$.
Note that each interval has the form of $[N_0,N_0+2^r]$ for some $r$, in view of \eqref{hgj},
\begin{equation}\label{ds1}
\sum\limits_{2^{r}+N_0> l\geq N_0}|\widehat{\mu}(h b^l)|=\sum\limits_{2^{r}> l\geq 0}|\widehat{\mu}(hb^{N_0} b^l)|=O(2^{r(1-\eta_0)})
=O(N^{(1-\eta_0)}).
\end{equation}
Clearly
$k\leq\log(N+1)-1$, thus
\begin{equation}\label{ds2}
\sum\limits^{N-1}_{l=0}|\widehat{\mu}(h b^l)|\leq N^{(1-\eta_0)}\log(N+1) =O(N^{1-\eta}).
\end{equation}
Combining \eqref{sblaa} with \eqref {ds2}, the proof is complete.
\section*{Acknowledgements} The authors would like to thank Aihua Fan, Xinggang He, Lingmin Liao, Baowei Wang, Shengyou Wen and Meng Wu for valuable suggestions. Thank Yuanyang Chang, Guotai Deng for assistances while the paper was being written. We also sincerely thank Teturo Kamae for careful reading and helpful comments on a preliminary version of this manuscript.

\section*{References}

\end{document}